\documentclass[final,5p,times,twocolumn]{elsarticle}
\usepackage[T1]{fontenc}
\usepackage{microtype}
\usepackage{amsmath,amssymb,amsthm,mathtools,enumitem,booktabs}
\usepackage{graphicx,tikz}
\usepackage{float}
\floatstyle{plain}
\newfloat{algorithm}{tbp}{loa}
\floatname{algorithm}{Algorithm}
\usepackage{pgfplots}
\pgfplotsset{compat=1.18}
\usetikzlibrary{arrows.meta,positioning,shapes.geometric}
\definecolor{cellblue}{RGB}{40,95,150}
\definecolor{cellorange}{RGB}{190,100,25}
\usepackage[hidelinks]{hyperref}
\newtheorem{theorem}{Theorem}[section]
\newtheorem{lemma}[theorem]{Lemma}
\newtheorem{proposition}[theorem]{Proposition}
\newtheorem{corollary}[theorem]{Corollary}
\theoremstyle{definition}\newtheorem{definition}{Definition}[section]
\theoremstyle{remark}\newtheorem{remark}[theorem]{Remark}
\newcommand{\R}{\mathbb R}\newcommand{\Q}{\mathbb Q}\newcommand{\Z}{\mathbb Z}
\newcommand{\eps}{\varepsilon}
\newcommand{\poly}{\operatorname{poly}}

\makeatletter
\let\ps@pprintTitle\ps@plain
\makeatother
\begin{document}
\begin{frontmatter}
\title{A short proof of a range-relative FPTAS for polynomial integer optimization over bounded polyhedra}
\author{Robert Hildebrand\corref{cor1}}
\cortext[cor1]{Corresponding author.}
\ead{rhil@vt.edu}
\affiliation{organization={Grado Department of Industrial and Systems Engineering, Virginia Tech},
  city={Blacksburg}, state={Virginia}, postcode={24061}, country={USA}}
\begin{abstract}
We give a short proof of a fully polynomial-time range-relative approximation scheme for polynomial optimization over the integer points of a bounded rational polyhedron in fixed dimension. The proof combines the reflecting-set decomposition of Cook, Hartmann, Kannan, and McDiarmid with Markov's polynomial derivative inequality and uses one integer linear feasibility query per refined cell. The running time is polynomial in the input encoding size, numerical degree, and reciprocal accuracy.
\end{abstract}
\begin{keyword}
Integer programming \sep Polynomial optimization \sep Approximation schemes \sep Fixed dimension \sep Reflecting cells
\end{keyword}
\end{frontmatter}

\section{Introduction and main result}

Let $P=\{\mathbf{x}\in\R^d:A\mathbf{x}\le\mathbf b\}$ be a bounded rational polyhedron, with $A\in\Q^{m\times d}$ and $\mathbf b\in\Q^m$, and let $f\in\Z[x_1,\ldots,x_d]$. We study
\begin{equation}\label{eq:problem}
 \begin{aligned}
 \min\quad & f(\mathbf{x})\\
 \text{s.t.}\quad & A\mathbf{x}\le\mathbf b,\\
 & \mathbf{x}\in\Z^d.
 \end{aligned}
\end{equation}
When $P\cap\Z^d\ne\varnothing$, choose minimizers and maximizers $\mathbf{x}_{\min},\mathbf{x}_{\max}$ and write $f_{\min}=f(\mathbf{x}_{\min})$ and $f_{\max}=f(\mathbf{x}_{\max})$.

De Loera, Hemmecke, K\"oppe, and Weismantel established an FPTAS for nonnegative polynomial maximization and a range-relative scheme for arbitrary polynomials in fixed dimension~\cite{DHKW06,DHKW08}. We give a short geometric proof for the bounded integer problem, using integer linear feasibility and an explicit bit-complexity bound. The guarantee is invariant under translation of the objective; it is not a multiplicative approximation to a nonnegative minimum.

An interactive explanation, supporting material, and partial Lean verification are available in the project repository.\footnote{\url{https://github.com/RobertHildebrand/FPTAS-Polynomial-Optimization}}

For continuous convex bodies, Piazzon and Vianello~\cite[Propositions~1--2]{PV} obtain range-relative approximation from polynomial norming meshes, selecting an arbitrary feasible representative in each sufficiently small box and applying Markov's inequality. Our refined reflecting cells provide feasible integer progressions, allowing error to be measured against the integer objective range rather than the continuous range.

The key ingredient is decomposition by relative constraint slacks. The reflecting sets of Cook, Hartmann, Kannan, and McDiarmid~\cite{CHKM} underpin their integer-hull analysis; related symmetric covers appear in the quadratic and cubic algorithms of Ari and Hildebrand~\cite{AH}, and dyadic slack grouping also appears in Wei's quadratic algorithm~\cite[Section~4.1]{Wei}. Here refinement produces long feasible integer progressions. Markov's polynomial derivative inequality~\cite[Theorem~5.1.8]{BorweinErdelyi} on these progressions controls the error at an integer optimum, allowing any feasible representative of its cell. The same representatives work for all polynomials of a prescribed degree.

Markov's inequality is essential to the polynomial cell bound: it controls variation relative to the integer objective range, so the required refinement depends only on degree and accuracy. A coefficient-based Lipschitz estimate alone could require refinement exponential in the input encoding size. The Lipschitz-based grid reduction of~\cite{DHKW08} can tolerate a large denominator because its integer solver depends polynomially on the denominator's bit length; explicit cell enumeration requires polynomially many cells.

\begin{figure*}[t]
\centering
\begin{tikzpicture}[
  stage/.style={draw=cellblue!65,line width=.6pt,rounded corners=2pt,
    fill=cellblue!3,text width=.255\textwidth,minimum height=2.05cm,
    inner sep=7pt,align=center,font=\small},
  flow/.style={-{Stealth[length=2.2mm]},line width=.8pt,cellblue},
  node distance=7mm]
\node[stage] (decompose) {
  {\color{cellblue}\bfseries 1. Decompose}\\[5pt]
  Shift and refine slack intervals\\
  to form reflecting cells $C$.
};
\node[stage,right=of decompose] (sample) {
  {\color{cellblue}\bfseries 2. Sample}\\[5pt]
  Find one $\mathbf z_C\in C\cap\Z^d$\\
  for each integer-feasible cell.
};
\node[stage,right=of sample] (select) {
  {\color{cellblue}\bfseries 3. Select}\\[5pt]
  Evaluate $f(\mathbf z_C)$ and\\
  return the best representative.
};
\draw[flow] (decompose)--(sample);
\draw[flow] (sample)--(select);
\end{tikzpicture}
\caption{Algorithm at a glance: decompose a relaxation with the same integer points, select one representative from each cell containing integer points, and return the representative with the best objective value.}\label{fig:overview}
\end{figure*}

The algorithm follows the decompose--sample--select procedure summarized in Figure~\ref{fig:overview}. No local objective model is optimized. The proof separates the reflecting geometry (Section~\ref{sec:cover}) from enumeration and bit complexity (Section~\ref{sec:algorithm}).

To state the complexity bound precisely, we fix the following encoding convention. Vectors are columns and are written in bold. For an integer $u$, its \emph{binary encoding size} is $\langle u\rangle=1+\lceil\log_2(1+|u|)\rceil$, counting a sign bit and the binary magnitude bits. We use \emph{encoding size}, \emph{bit length}, or simply \emph{size} for storage measured in bits when the context is clear; \emph{magnitude} refers to the numerical absolute value. A rational number $u/v$ is stored in lowest terms with $v>0$, with size $\langle u/v\rangle=\langle u\rangle+\langle v\rangle$. For matrices, vectors, and lists, $\langle\cdot\rangle$ is the sum of the entry sizes together with the binary encoding sizes of the dimensions and list lengths. The polynomial is supplied explicitly as
\[
 f(\mathbf{x})=\sum_{\boldsymbol{\alpha}\in\mathcal A}c_{\boldsymbol{\alpha}}\mathbf{x}^{\boldsymbol{\alpha}},
 \qquad c_{\boldsymbol{\alpha}}\in\Z,\quad \boldsymbol{\alpha}\in\Z_+^d,\quad |\boldsymbol{\alpha}|_1\le D.
\]
Following~\cite{DHKW08}, coefficients are encoded in binary and exponent vectors in unary: each exponent $\alpha_j$ is stored as $\alpha_j$ one-bits followed by a zero, using $\alpha_j+1$ bits. The input is therefore a bit string, although exponents use unary rather than positional binary notation. Accordingly, $\langle f\rangle$ includes the coefficient sizes and $\sum_{\boldsymbol{\alpha}\in\mathcal A}\sum_{j=1}^d(\alpha_j+1)$, together with dimensions and list lengths. Write $\varphi=\langle A,\mathbf b,f,\eps\rangle$ for the total input encoding size. We set $D=\max\{1,\deg f\}$ from the nonzero monomials, taking $D=1$ for the zero polynomial; no separate degree input is needed. We retain $D$ explicitly in the running-time bound to show its contribution.

\begin{definition}[Range-relative FPTAS]\label{def:fptas}
Under the encoding convention above and for fixed dimension $d$, a range-relative fully polynomial-time approximation scheme is an algorithm that, for every rational $0<\eps\le1$, reports infeasibility when $P\cap\Z^d=\varnothing$, and otherwise returns $\mathbf{x}_{\eps}\in P\cap\Z^d$ satisfying
\begin{equation}\label{eq:guarantee}
 0\le f(\mathbf{x}_{\eps})-f_{\min}\le\eps(f_{\max}-f_{\min}).
\end{equation}
Its running time is polynomial in the total input encoding size $\varphi$ and $1/\eps$. The polynomial may depend on the fixed dimension. This is the fully polynomial-time weak-approximation notion of~\cite{DHKW08}. The maximization definition uses $f_{\max}-f(\mathbf{x}_{\eps})$ on the left.
\end{definition}

Running times are measured in bit operations; oracle calls and arithmetic operations are counted separately where stated. The function $\poly$ denotes a polynomial with an absolute exponent, independent of $d,D,\eps$; different occurrences may denote different polynomials. Logs are base two unless indicated otherwise.

\begin{theorem}[Polynomial approximation by reflecting cells]\label{thm:main}
Let $d\ge1$ and let $P=\{\mathbf{x}\in\R^d:A\mathbf{x}\le\mathbf b\}$ be bounded, with $A\in\Z^{m\times d}$ and $\mathbf b\in\Z^m$ (rational rows are normalized in Section~2), let $f\in\Z[x_1,\ldots,x_d]$, and put $D=\max\{1,\deg f\}$, with $D=1$ when $f\equiv0$. For rational $0<\eps\le1$, let $\varphi=\langle A,\mathbf b,f,\eps\rangle$ be the total input encoding size.
There is a deterministic algorithm that reports infeasibility or returns $\mathbf{x}_{\eps}\in P\cap\Z^d$ satisfying~\eqref{eq:guarantee}, in at most
\begin{equation}\label{eq:runtime-coarse}
 \left(\frac{\varphi D}{\eps}\right)^{O(d)}
\end{equation}
bit operations, with an absolute constant in the $O(d)$ exponent.
\end{theorem}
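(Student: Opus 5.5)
We follow the decompose--sample--select scheme of Figure~\ref{fig:overview}. For each row $i$ set $\beta_i^-=\min_{\mathbf{x}\in P}\mathbf a_i^\top\mathbf{x}$ and $\beta_i^+=\mathbf b_i$; both are computable by linear programming in polynomial time. Every point of $P$ then has $\mathbf a_i^\top\mathbf x\in[\beta_i^-,\beta_i^+]$. For each constraint we split this interval symmetrically about its midpoint, in dyadic layers measured by the distance to the nearer endpoint. Layer $k$ consists of points whose slack to the closer side lies in $[2^{k},2^{k+1})$ (with a bottom layer for slacks below $1$). We further refine each dyadic layer into $N=\lceil cD^2/\eps\rceil$ equal subintervals. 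The number of layers per row is $O(\varphi)$, so each row has $O(\varphi N)$ pieces. A cell $C$ is a choice of one piece per row, intersected with $P$.

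Naively this gives $(\varphi N)^m$ cells, which is not polynomial since $m$ is not fixed. To fix this we use the reflecting-set argument of \cite{CHKM}, as anticipated in Section~\ref{sec:cover}. In fixed dimension the arrangement of all hyperplanes $\mathbf a_i^\top\mathbf x=t$, with $t$ ranging over the breakpoints, has only $(m\varphi N)^{O(d)}$ nonempty full cells. These cells can be enumerated by standard hyperplane-arrangement enumeration, or more simply by recursively splitting polytopes with LP feasibility checks. For each cell we run Lenstra-type integer feasibility in fixed dimension, once per cell. Each run costs polynomial time in the cell's encoding, which is $\poly(\varphi,\log N)$. We keep one integer point $\mathbf z_C$ per integer-feasible cell and return the $\mathbf z_C$ of least $f$-value; evaluating $f$ is polynomial because exponents are given in unary. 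This yields the bound~\eqref{eq:runtime-coarse}.

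For correctness, let $\mathbf x^*=\mathbf x_{\min}$ lie in the cell $C$, and let $\mathbf z=\mathbf z_C$. The key geometric claim is the \emph{reflection property} of refined cells. Set $\mathbf v=\mathbf z-\mathbf x^*\in\Z^d$. Then the integer progression $\mathbf x^*+t\mathbf v$ stays in $P$ for all integers $t$ with $|t|\le T$, where $T\gtrsim N$. The reason is as follows. For each row, $\mathbf a_i^\top\mathbf v$ is at most the width of a refined piece, which is about $2^{k}/N$. Meanwhile the slack of $\mathbf x^*$ on either side is at least about $2^{k}$, since both points lie in the same dyadic layer. Hence the row tolerates $\Omega(N)$ steps in both directions. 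The bottom layer, with slack below $1$, needs care, because there integrality forces $\mathbf a_i^\top\mathbf v$ into a discrete set. We handle it by making $\beta_i^\pm$ integral after row normalization, so that both points share the same tight or near-tight behaviour.

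With the progression in hand, $g(t)=f(\mathbf x^*+t\mathbf v)$ is a univariate polynomial of degree $\le D$. On the integers $|t|\le T$ its values lie in $[f_{\min},f_{\max}]$. A discrete Markov inequality on equispaced nodes then applies: with $T\ge 2D^2$, the sup of $|g|$ over $[-T,T]$ is at most twice the sup over the integer nodes. Applying Markov's inequality \cite[Theorem~5.1.8]{BorweinErdelyi} to $g-\tfrac{f_{\min}+f_{\max}}2$ gives
\[
|g(1)-g(0)|\le \frac{D^2}{T}\cdot(f_{\max}-f_{\min})\cdot O(1)\le\eps(f_{\max}-f_{\min}).
\]
Here $g(1)-g(0)=f(\mathbf z)-f_{\min}$, which gives~\eqref{eq:guarantee}.

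The main obstacle is making the reflection property rigorous: a uniform $T=\Omega(N)$ for every row, including the near-tight bottom layers. It also requires the Markov step on a discrete progression rather than an interval. I would try first to prove a lemma of the form: if $\mathbf x,\mathbf z\in C$, then $\mathbf x+t(\mathbf z-\mathbf x)\in P$ for $|t|\le N/4$. The approach is to compare each row's piece width to the distance from that piece to both interval endpoints. The discrete-to-continuous comparison then follows from the standard norming-set inequality used by \cite{PV}.
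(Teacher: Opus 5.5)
Your proposal is correct and proves the theorem as stated, but it departs from the paper in both of its main ingredients.

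\textbf{Geometry.} The paper applies the CHKM outward shift $\widetilde P=\{A\mathbf x\le\mathbf b+\tfrac12\mathbf 1\}$. Every integer point of $P$ then has shifted slack in $[\tfrac12,2^\Phi)$ and lies in the interior of $Q$, even when $P$ is lower-dimensional. The paper layers only the one-sided slacks, from $1/4$ up to the a priori CHKM bound $2^\Phi$. Lemma~\ref{lem:reflection} gives the real inclusion $C\pm D_C\subseteq\widetilde P$, and integrality is used only to return to $P$.

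You skip the shift and layer the LP range $[\beta_i^-,b_i]$ from both ends. The lower-side layers are unnecessary, since only $b_i-\mathbf a_i^\top\mathbf x$ constrains $P$, but they are harmless. Your reflection holds for integer points only, which is all you need. In a bottom piece, integer points share the value of $\mathbf a_i^\top\mathbf x$, so $\mathbf a_i^\top\mathbf v=0$. Elsewhere $|\mathbf a_i^\top\mathbf v|\le 2^k/N$ while $b_i-\mathbf a_i^\top\mathbf x^*\ge 2^k$, so you get $|t|\le N$ directly.

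This step needs some care in two places; the shift is what lets the paper ignore both.
\begin{itemize}
\item You need half-open pieces, or closed bottom pieces of width below $1$. A closed, unrefined piece $[0,1]$ admits slacks $0$ and $1$ together, and the progression then leaves $P$ within two steps.
\item For lower-dimensional $P$, take closures of the chambers of the arrangement in $\R^d$, not full-dimensional pieces of $P$.
\end{itemize}

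\textbf{Analysis.} You use the first-derivative Markov bound $|g(1)-g(0)|\le D^2(f_{\max}-f_{\min})/T$. This requires $T\approx D^2/\eps$, hence about $\eps^{-d}$ cells. You use minimality only through $g(0)=f_{\min}$. The paper's Lemma~\ref{lem:markov} instead uses $p(-1)\ge p(0)$ to bound $p(1)-p(0)$ by the symmetric second difference $p(1)+p(-1)-2p(0)=2(h(1)-h(0))$, where $h(t^2)$ is the even part. Markov's inequality on $[0,\ell^2]$ then gains a square, giving $D^2/\ell^2$. Hence $\ell=\max\{D^2,\lceil D/\sqrt\eps\rceil\}$ suffices, with only $O(\eps^{-d/2})$ feasibility queries.

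Both counts are $(\varphi D/\eps)^{O(d)}$, so your more elementary estimate suffices for this theorem. The paper's sharper lemma is what buys the $\eps^{-d/2}$ call count it states after the theorem and uses in the comparison with Del Pia.
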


Since the input explicitly lists the $m$ inequalities in $d$ variables, $m,d\le\varphi$; their separate factors in the detailed running-time bound are therefore absorbed into the base $\varphi$ and the $O(d)$ exponent in~\eqref{eq:runtime-coarse}. The dependence on dimension remains explicit in that exponent. More precisely, for fixed $P,D,d$, the algorithm makes $O(\eps^{-d/2})$ integer-feasibility queries; the detailed running-time bound in the proof retains the factor $\max\{D^2,D/\sqrt\eps\}$.

Rational inequalities can be normalized to integral data in polynomial time, as detailed below. Rational objective coefficients can likewise be reduced to integer coefficients by clearing denominators in polynomial time; positive scaling preserves the approximation guarantee, as in~\cite[Section~5]{DHKW08}. The remaining construction parameters and an explicit upper bound on the number of feasibility queries are introduced in the proof. For fixed $d$, the bound is polynomial in $\varphi$ and $1/\eps$, hence gives an FPTAS in Definition~\ref{def:fptas}. The same explicit bound remains valid for binary-encoded exponents, but then its numerical dependence on $D$ does not assert an FPTAS in the sparse binary input size.

The following nonnegative-maximization guarantee was established by De Loera et al.~\cite[Theorem~1.1]{DHKW06}; it follows from our range-relative bound by the same elementary conversion.

\begin{corollary}[Nonnegative maximization {\cite[Theorem~1.1]{DHKW06}}]\label{cor:max}
Under the same assumptions and encoding conventions, if $f\ge0$ on $P\cap\Z^d$, choosing the largest objective value among the representatives yields
\[
 f(\mathbf{x}_{\eps})\ge(1-\eps)f_{\max}.
\]
Thus the construction gives a multiplicative FPTAS for nonnegative maximization under the encoding convention of Definition~\ref{def:fptas}.
\end{corollary}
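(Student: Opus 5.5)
The plan is to reduce the corollary to Theorem~\ref{thm:main} applied to the negated objective, and then use the elementary inequality $f_{\max}-f_{\min}\le f_{\max}$, which holds because $f_{\min}\ge0$.

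\emph{Reduction to minimization.} Put $g=-f\in\Z[x_1,\ldots,x_d]$. Negation changes no exponent vector and at most the sign of each coefficient, so $\deg g=\deg f$ and $D$ is unchanged. The encoding size $\langle A,\mathbf b,g,\eps\rangle$ equals $\varphi$, since $\langle -u\rangle=\langle u\rangle$ for every integer $u$. Hence the bound~\eqref{eq:runtime-coarse} carries over verbatim.

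\emph{Applying the theorem.} Run the algorithm of Theorem~\ref{thm:main} on $g$. If $P\cap\Z^d=\varnothing$ it reports infeasibility, which is the required behaviour. Otherwise it returns $\mathbf{x}_{\eps}\in P\cap\Z^d$ with
\[
0\le g(\mathbf{x}_{\eps})-g_{\min}\le\eps(g_{\max}-g_{\min}).
\]
Since $g_{\min}=-f_{\max}$ and $g_{\max}=-f_{\min}$, this reads
\[
0\le f_{\max}-f(\mathbf{x}_{\eps})\le\eps(f_{\max}-f_{\min}).
\]
Minimizing $g$ over the representatives $\mathbf z_C$ is the same as choosing the representative with the largest value of $f$. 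This matches the selection rule stated in the corollary, so it is the same algorithm with its last step reversed.

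\emph{Converting the guarantee.} Because $f\ge0$ on $P\cap\Z^d$, we have $f_{\min}\ge0$, hence $f_{\max}-f_{\min}\le f_{\max}$. Since also $\eps>0$,
\[
f(\mathbf{x}_{\eps})\ge f_{\max}-\eps(f_{\max}-f_{\min})\ge f_{\max}-\eps f_{\max}=(1-\eps)f_{\max}.
\]
The running time is still polynomial in $\varphi$ and $1/\eps$ for fixed $d$, so this is a multiplicative FPTAS.

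I expect no step to be a real obstacle. The only point needing care is that Theorem~\ref{thm:main} applies to $-f$ with the same encoding size and degree, which the bookkeeping above confirms.
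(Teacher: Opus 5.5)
Your proposal is correct and follows essentially the paper's argument. The paper applies Proposition~\ref{prop:extrema} to $-f$ to obtain $f_{\max}-f(\mathbf{x}_{\eps})\le\eps(f_{\max}-f_{\min})$ and then uses $f_{\min}\ge0$; you route the same step through Theorem~\ref{thm:main} for $g=-f$, with explicit degree and encoding-size bookkeeping, which amounts to the same thing because the representative set depends only on $P$, $D$, and $\eps$.
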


For the mixed-integer result, we follow the integral-slice scaling, barycentric rounding, and reduction to integer optimization of De Loera et al.~\cite[Section~3]{DHKW08}, who established the approximation guarantees in~\cite[Theorems~1(a) and~2(b)]{DHKW08}. We improve the grid multiplier in this reduction by using Markov's inequality in place of their coefficient-based Lipschitz estimate. Combining the resulting coefficient-independent multiplier with Theorem~\ref{thm:main} yields the explicit running-time bound below.

\begin{corollary}[Mixed-integer approximation]\label{cor:mixed}
Fix the total dimension $d=d_c+d_i$. Let $P=\{\mathbf{x}\in\R^d:A\mathbf{x}\le\mathbf b\}$ be a bounded rational polyhedron and $f\in\Z[x_1,\ldots,x_d]$. Under the same encoding convention, for rational $0<\eps\le1$ there is an algorithm, polynomial in $\varphi=\langle A,\mathbf b,f,\eps\rangle$ and $1/\eps$, that reports infeasibility or returns
\[
\begin{gathered}\mathbf{x}_{\eps}\in P\cap(\Q^{d_c}\times\Z^{d_i}),\\
 0\le f(\mathbf{x}_{\eps})-f_{\min}\le\eps(f_{\max}-f_{\min}),
\end{gathered}\]
where the extrema are over $P\cap(\R^{d_c}\times\Z^{d_i})$. For nonnegative $f$ on this set, a multiplicative FPTAS for maximization is obtained as well. The running time is at most
\begin{equation}\label{eq:mixed-runtime}
 \left(\frac{\varphi D}{\eps}\right)^{O(d(d_c+1))}
\end{equation}
bit operations, with an absolute constant in the exponent.
\end{corollary}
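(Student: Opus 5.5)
Following \cite[Section~3]{DHKW08}, I will reduce the mixed-integer problem to a pure integer problem on a scaled grid and then apply Theorem~\ref{thm:main}. The only new ingredient is choosing the grid multiplier with Markov's inequality instead of a coefficient-based Lipschitz bound.

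First I normalize $P$ to integral data, and I check feasibility of the projection onto the integer variables with mixed-integer linear feasibility in fixed dimension (Lenstra-type). For a fixed integer part $\mathbf{x}_i$, the continuous slice $P_{\mathbf{x}_i}\subseteq\R^{d_c}$ is a polytope. I pick a multiplier $k$ and replace the continuous variables by $\mathbf{y}/k$ with $\mathbf{y}\in\Z^{d_c}$. The problem becomes $\min f(\mathbf{y}/k,\mathbf{x}_i)$ over the integer points of $\{(\mathbf{y},\mathbf{x}_i): A(\mathbf{y}/k,\mathbf{x}_i)\le\mathbf b\}$. After clearing denominators (multiply $f$ by $k^D$), this is an integer program in dimension $d$ whose encoding size grows by $O(dD\log k)$ plus the size of $A$ times $\log k$.

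The approximation argument needs two facts. (a) Every point of a slice is close to a scaled grid point of that slice, in the sense that $f$ changes by at most $\frac{\eps}{3}(f_{\max}-f_{\min})$. (b) The grid's range-relative error composes with Theorem~\ref{thm:main} run at accuracy $\eps/3$. For (a) I use barycentric rounding, as in \cite{DHKW08}. Write a point of the slice as a convex combination of at most $d_c+1$ vertices. Then $k$-fold dilation makes the slice contain a grid point inside a small homothetic copy near any point, provided $k$ is large relative to $d_c$ and $D^2/\eps$. I bound the variation of $f$ along the segment from the continuous optimum to the chosen grid point by applying Markov's inequality on the chord of the slice through these points. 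On that chord $f$ is a univariate polynomial of degree $\le D$ whose range is at most $f_{\max}-f_{\min}$, where the range is taken over the mixed-integer feasible set, since the whole chord lies in the slice. This gives a relative error of $2D^2\delta$ for a displacement that is a fraction $\delta$ of the chord. Barycentric rounding supplies $\delta=O(d_c/k)$, so $k=\Theta(d_cD^2/\eps)$ suffices, independently of the coefficients. Degenerate slices (lower dimensional, or too short relative to $1/k$) need care. I will handle them with the dilation trick of \cite{DHKW08}: scale the integral slice by $k$ and round within its affine hull.

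The grid optimum value $g^\ast$ satisfies $f_{\min}\le g^\ast\le f_{\min}+\frac{\eps}{3}R$, where $R=f_{\max}-f_{\min}$. The grid's range is at most $R$, since its points are feasible. So the returned point has error at most $\frac{\eps}{3}R+\frac{\eps}{3}R\le\eps R$. For the running time, substitute $\varphi'=\varphi\cdot O(d_c\log k)$ and $k=\poly(D,1/\eps)$ into \eqref{eq:runtime-coarse}. The scaled problem's encoding involves $k$ through the unary-exponent polynomial $k^{D-|\boldsymbol\alpha|}$ coefficients and the scaled constraints. The multiplier enters the bound through the number of cells needed in the continuous directions: each continuous direction has extent up to $k\cdot 2^{\varphi}$, which contributes a factor like $(kD/\eps)^{O(d)}$ per refinement level. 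Accounting for this as in the detailed bound of Theorem~\ref{thm:main} gives $(\varphi D/\eps)^{O(d(d_c+1))}$. The nonnegative maximization claim then follows as in Corollary~\ref{cor:max}.

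The main obstacle is step (a) for thin or degenerate slices. There Markov's inequality must be applied on chords that stay inside the feasible mixed-integer set, and the rounded grid point must remain feasible. I will first try the interior-homothety version of barycentric rounding: shrink toward a fixed grid point by factor $1-O(d_c/k)$.
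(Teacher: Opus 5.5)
Your step (a) has a genuine gap. You use the grid $k^{-1}\Z^{d_c}$ with $k=\Theta(d_cD^2/\eps)$ chosen independently of $A$. Barycentric rounding then does not land on grid points, because the vertices of a slice $P_{\mathbf z}$ have denominators given by $d_c\times d_c$ subdeterminants of the continuous columns of $A$, which have nothing to do with $k$.

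For example, take $d_c=d_i=1$, $P=\{(x,z):px=z,\ 0\le z\le1\}$ with a prime $p>k$, and $f=-z$. The mixed-integer set is $\{(0,0),(1/p,1)\}$, but the only grid point is $(0,0)$, so the error is the whole range $f_{\max}-f_{\min}$. With $1\le z\le1$ instead, the grid problem is empty although the mixed-integer set is not.

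The obstruction is arithmetic, not a matter of degenerate slices. Full-dimensional slices of width about $1/p$ fail the same way, and shrinking toward a grid point presupposes that the slice contains one.

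The missing ingredient is the common denominator of \cite[Lemma~9]{DHKW08}. Let $\Delta$ be the least common multiple of the nonzero absolute $d_c\times d_c$ subdeterminants of the continuous-variable matrix. Then every vertex of every nonempty slice lies in $\Delta^{-1}\Z^{d_c}$, and you should use the grid $(k\Delta)^{-1}\Z^{d_c}$. Round all Carath\'eodory weights except a largest one, $\lambda_0\ge1/(d_c+1)$, up to multiples of $1/k$. This gives a grid point $\mathbf y=(1-t)\mathbf x+t\mathbf w$ with $\mathbf w\in P_{\mathbf z}$ and $t=d_c(d_c+1)/k$, uniformly for all slices, lower-dimensional ones included. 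Your Markov chord argument then goes through with $k$ of order $D^2d_c(d_c+1)/\eps$. Note the displacement fraction is $d_c(d_c+1)/k$ rather than $O(d_c/k)$: the deficit of up to $d_c/k$ in the rounded weight $\mu_0$ must be small relative to $\lambda_0$.

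This omission also leaves your running-time accounting without a source for the exponent $O(d(d_c+1))$. With $k=\poly(D,1/\eps)$ and encoding growth $O(d_c\log k)$, Theorem~\ref{thm:main} would give exponent $O(d)$. The cell count there depends on $m$, $\Phi$ and $\ell$, with magnitudes entering only through bit lengths, so there is no separate factor from the extent of the continuous directions.

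The factor $d_c+1$ comes from $\Delta$. Hadamard's inequality gives $\log\Delta=O\bigl(\binom{m}{d_c}d_c(\tau+\log(d_c+1))\bigr)$. Hence, with $Q=k\Delta$, the integer polynomial $Q^Df(\mathbf u/Q,\mathbf z)$ and the rescaled constraints have encoding size $\varphi^{O(d_c+1)}$. Computing $\Delta$ by enumerating bases costs $\varphi^{O(d_c+1)}$ bit operations, and substituting into \eqref{eq:runtime-coarse} gives \eqref{eq:mixed-runtime}.

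Once the grid provably preserves nonemptiness, your preliminary Lenstra-type feasibility test is unnecessary. Theorem~\ref{thm:main} on the grid reports infeasibility exactly when the mixed-integer set is empty.
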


\paragraph{Comparison with prior algorithms}

The short rational generating functions used by De Loera et al. build on Barvinok's lattice-point counting algorithm~\cite{Barvinok}. Their approach~\cite{DHKW06,DHKW08} computes weighted sums involving $f^q$, where $q=O(1+\eps^{-1}\log N)$ and $N=|P\cap\Z^d|$. The standard Barvinok counting bound is $\langle A,\mathbf b\rangle^{O(d\log(d+1))}$~\cite[Theorem~2.5]{PakPanova} (under their arithmetic-cost convention; see Section~8.8); our entire algorithm has bound $\varphi^{O(d)}$ for fixed $D,\eps$. This compares available upper bounds, not a runtime separation: counting alone does not describe their FPTAS, and our potentially large cell count does not establish practical superiority.

Del Pia's quadratic scheme~\cite[Proposition~3 and Section~6.2]{DelPia} uses affine optimization over boxes after geometric preprocessing, with flatness recursion in the remaining cases. For fixed quadratic rank $k$ and integer dimension $p$, it uses $O_{k,p}(1+\eps^{-k/2})$ MILP subproblems and allows variable continuous dimension. Our feasibility-call exponent matches $k/2$ at $k=d$, but our enumeration costs an additional factor of order $\eps^{-1/2}$. Our advantages are arbitrary degree and objective-independent representatives; we do not recover his low-rank dependence.

\section{Reflecting cells and approximation}\label{sec:cover}

We first construct cells supporting feasible integer progressions, then apply Markov's inequality along a progression through an integer optimum. Neither the optimum nor the objective range is needed by the algorithm.

Clear rational row denominators by multiplying each row by the product of its positive denominators. This takes polynomial time; normalized entries have $O(\varphi)$ bits. Equalities may be represented by pairs of inequalities, and lower-dimensional polytopes are allowed. Real feasibility and boundedness are testable by rational linear programming; boundedness remains a hypothesis of our theorem. After testing real feasibility, remove zero constraint rows. We treat $d\ge1$ below; dimension zero is immediate.

For the remainder of the construction, $A,\mathbf b$ denote the normalized integral data; $\varphi$ continues to denote the original input size. To track the construction explicitly, define
\begin{equation}\label{eq:tau}
 \tau=\left\lceil\log_2\bigl(2+\max\{\|A\|_{\max},\|\mathbf b\|_\infty\}\bigr)\right\rceil.
\end{equation}
In Schrijver's encoding convention~\cite[Section~2.1]{Schrijver}, each normalized row inequality has size at most $(d+1)(\tau+3)$, including its right-hand side. This bounds the per-inequality size parameter used by CHKM, distinct from our total input size $\varphi$. Here $\|A\|_{\max}$ is the largest absolute matrix entry, so $\tau$ bounds the coefficient bit lengths up to a constant. The remaining parameters have the following roles: $\Phi$ gives the common magnitude bound $2^\Phi$ for coordinates and shifted slacks and determines how many intervals $[2^k,2^{k+1}]$ are needed below, $\ell$ is the refinement, and $H$ bounds the number of hyperplanes. Set
\begin{align}
 \Phi&=5d^2(d+1)(\tau+3)+2,\label{eq:Phi}\\
 \ell&=\max\{D^2,\lceil D/\sqrt\eps\rceil\},\qquad H=m((\Phi+2)\ell+1),\label{eq:HT}\\
 R_d(H)&=\sum_{j=0}^{d}\binom Hj\le(1+H)^d.\label{eq:F}
\end{align}

Following the outward perturbation in~\cite[proof of Theorem~2.1, p.~29]{CHKM}, define
\[
 \widetilde P:=\{\mathbf{x}:A\mathbf{x}\le\mathbf b+\tfrac12\mathbf1\},\quad
 s_i(\mathbf{x}):=b_i+\tfrac12-\mathbf a_i^\top\mathbf{x}.
\]
Since $A,\mathbf b$ are integral, $\widetilde P\cap\Z^d=P\cap\Z^d$. Every integer point of $P$ has $s_i\ge1/2$. We will extrapolate real segments in $\widetilde P$ and use integrality to return their integer samples to $P$. This shift is applied only to the pure-integer problem, including the scaled integer problem in the mixed-integer reduction.

The coordinate and slack estimates in the proof of CHKM's Theorem~2.1~\cite[p.~29]{CHKM}, with their per-inequality size bounded by $(d+1)(\tau+3)$, give
\begin{equation}\label{eq:integer-bounds}
 |x_j|<2^\Phi,\qquad \tfrac12\le s_i(\mathbf{x})<2^\Phi
 \quad(\mathbf{x}\in P\cap\Z^d).
\end{equation}
Their estimates are stated at vertices of the integer hull and extend throughout that hull by linearity. Their integral shifted rows $2A\mathbf{x}\le2\mathbf b+\mathbf1$ have slacks $2s_i$; the extra constant in~\eqref{eq:Phi} safely covers this convention. Their argument treats $d\ge2$; for $d=1$, the same bounds follow directly from the interval endpoints and $|a_{ij}|,|b_i|<2^\tau$. Only bounds on integer feasible points are needed here. The extended version retains the detailed determinant proof with a sharper exponent.

We use the reflecting-set construction of CHKM~\cite[proof of Theorem~2.1, pp.~29--30]{CHKM}: move integer points into the interior and group slacks in dyadic intervals. Our modification subdivides each such interval into $\ell$ equal parts. This extends their reflection calculation to long integer progressions; see also~\cite[Theorem~2.2]{AH}. Figure~\ref{fig:pentagon-cells} illustrates the refinement.

\begin{definition}[Closed refined cells]\label{def:thresholds}
For integers $\Phi,\ell\ge1$, let
\begin{equation}\label{eq:thresholds}
\begin{split}
\mathcal B_{\Phi,\ell}=\{2^\Phi\}\cup\\
 \left\{2^k\left(1+\frac j\ell\right):-2\le k<\Phi,\ 0\le j<\ell\right\}.
\end{split}
\end{equation}
There are $(\Phi+2)\ell+1$ thresholds, starting at $1/4$. Set
\[
 Q=\{\mathbf{x}:1/4\le s_i(\mathbf{x})\le2^\Phi\ (1\le i\le m)\}.
\]
Subdivide $Q$ by the hyperplanes $s_i(\mathbf{x})=a$, $a\in\mathcal B_{\Phi,\ell}$. The family $\mathcal C$ consists of the closures of the resulting full-dimensional chambers. If $Q$ has empty interior, set $\mathcal C=\varnothing$.
\end{definition}

Every integer point of $P$ lies in the interior of $Q$, by~\eqref{eq:integer-bounds}, and therefore belongs to a closed cell. The cells may overlap on subdivision hyperplanes, but no separate lower-dimensional faces are needed. Conversely, $Q\subseteq\widetilde P$, so every integer point in a cell belongs to $P$. These statements also justify declaring integer infeasibility when $Q$ has empty interior. The set $Q$ is bounded: its recession directions satisfy $A\mathbf v=0$, which forces $\mathbf v=0$ since $P$ is nonempty and bounded.

If $a<b$ are successive thresholds, then
\begin{equation}\label{eq:width}
 b-a\le a/\ell.
\end{equation}
Indeed, on $[2^k,2^{k+1}]$ the spacing is $2^k/\ell$ and $a\ge2^k$. The same argument applies to $[1/4,1/2]$ and $[1/2,1]$.

\begin{figure*}[t]
\centering
\begin{tikzpicture}[x=.29cm,y=.29cm,font=\small]
\foreach \shift/\panel/\refined in {0/{(a) Dyadic chambers}/0,21/{(b) Refined chambers, $\ell=2$}/1} {
 \begin{scope}[xshift=\shift*.29cm]
  \begin{scope}
   \clip (-.25,-.25)--(16.25,-.25)--(16.25,8)--(8,16.25)--(-.25,16.25)--cycle;
   \fill[gray!5] (-.25,-.25) rectangle (16.25,16.25);
   \fill[cellblue!20] (8.5,3.5)--(12.5,3.5)--(12.5,4)--(9,7.5)--(8.5,7.5)--cycle;
   \foreach \a in {.25,.5,1,2,4,8,16,32} {
    \draw[gray!60,thin] ({\a-.5},-.25)--({\a-.5},16.25);
    \draw[gray!60,thin] ({16.5-\a},-.25)--({16.5-\a},16.25);
    \draw[gray!60,thin] (-.25,{\a-.5})--(16.25,{\a-.5});
    \draw[gray!60,thin] (-.25,{16.5-\a})--(16.25,{16.5-\a});
    \draw[gray!60,thin] (-10,{34.5-\a})--(35,{-10.5-\a});
   }
   \ifnum\refined=1
    \foreach \a in {.375,.75,1.5,3,6,12,24} {
     \draw[cellorange,densely dashed,thin] ({\a-.5},-.25)--({\a-.5},16.25);
     \draw[cellorange,densely dashed,thin] ({16.5-\a},-.25)--({16.5-\a},16.25);
     \draw[cellorange,densely dashed,thin] (-.25,{\a-.5})--(16.25,{\a-.5});
     \draw[cellorange,densely dashed,thin] (-.25,{16.5-\a})--(16.25,{16.5-\a});
     \draw[cellorange,densely dashed,thin] (-10,{34.5-\a})--(35,{-10.5-\a});
    }
    \fill[cellblue!45] (8.5,4.5) rectangle (10.5,5.5);
    \draw[cellblue,thick] (8.5,4.5) rectangle (10.5,5.5);
   \else
    \draw[cellblue,thick] (8.5,3.5)--(12.5,3.5)--(12.5,4)--(9,7.5)--(8.5,7.5)--cycle;
   \fi
   \foreach \x in {0,...,16} {\foreach \y in {0,...,16} {
    \pgfmathtruncatemacro{\inside}{\x+\y<=24}
    \ifnum\inside=1 \fill[black!45] (\x,\y) circle (.045);\fi
   }}
  \end{scope}
  \draw[black!55,dashed] (0,0)--(16,0)--(16,8)--(8,16)--(0,16)--cycle;
  \draw[thick] (-.25,-.25)--(16.25,-.25)--(16.25,8)--(8,16.25)--(-.25,16.25)--cycle;
  \fill[cellblue] (9,5) circle (.12);
  \node[below] at (8,-1) {\panel};
 \end{scope}
}
\end{tikzpicture}
\caption{Closed shifted-slack chambers for $P=\{\mathbf{x}\in[0,16]^2:x_1+x_2\le24\}$ (dashed outline). The solid outline is $Q$, whose lower shifted-slack bound is $1/4$; the upper bound is inactive in this example. The fifth shifted slack is $24.5-x_1-x_2$, giving the diagonal lines. The shaded dyadic chamber is refined in (b); its darker closed rectangle $[8.5,10.5]\times[4.5,5.5]$ contains the representative $(9,5)$. All lattice points of $P$, including its boundary points, lie inside $Q$. Adjacent closed cells share their boundaries.}
\label{fig:pentagon-cells}
\end{figure*}

\begin{lemma}[Refined CHKM reflection property]\label{lem:reflection}
Each $C\in\mathcal C$ is described by $a_i\le s_i(\mathbf{x})\le c_i$ for successive thresholds $a_i<c_i$. Set
\[
 D_C=\{\mathbf v\in\R^d:|\mathbf a_i^\top\mathbf v|\le a_i\ (1\le i\le m)\}.
\]
Then $\ell(C-C)\subseteq D_C$ and $C\pm D_C\subseteq\widetilde P$. In particular,
\begin{equation}\label{eq:long}
 \mathbf{x}+k(\mathbf{y}-\mathbf{x})\in P\cap\Z^d
 \quad(\mathbf{x},\mathbf y\in C\cap\Z^d,\ k\in\Z,\ |k|\le\ell).
\end{equation}
\end{lemma}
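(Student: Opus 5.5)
The plan is to first justify the description of a cell, then prove the two inclusions by linear estimates on each constraint separately, and finally combine them with integrality to get \eqref{eq:long}.

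\emph{Cell description.} A full-dimensional chamber of the arrangement inside $Q$ lies, for each $i$, between two consecutive hyperplanes $s_i=a$ and $s_i=c$ with $a,c\in\mathcal B_{\Phi,\ell}$. Otherwise some threshold hyperplane would cut through its interior. Taking closures gives $C=\{\mathbf x: a_i\le s_i(\mathbf x)\le c_i\ \forall i\}$, possibly after dropping redundancies. The bounds $1/4$ and $2^\Phi$ are themselves thresholds, so the description stays inside $Q$.

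\emph{First inclusion, $\ell(C-C)\subseteq D_C$.} Take $\mathbf x,\mathbf y\in C$. Since $s_i$ is affine with linear part $-\mathbf a_i^\top$, we have $\mathbf a_i^\top(\mathbf y-\mathbf x)=s_i(\mathbf x)-s_i(\mathbf y)$. Both slacks lie in $[a_i,c_i]$, so $|\mathbf a_i^\top(\mathbf y-\mathbf x)|\le c_i-a_i\le a_i/\ell$ by \eqref{eq:width}. Multiplying by $\ell$ gives $|\mathbf a_i^\top\ell(\mathbf y-\mathbf x)|\le a_i$ for every $i$, which is exactly $\ell(\mathbf y-\mathbf x)\in D_C$.

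\emph{Second inclusion, $C\pm D_C\subseteq\widetilde P$.} Take $\mathbf x\in C$ and $\mathbf v\in D_C$. Then $s_i(\mathbf x\pm\mathbf v)=s_i(\mathbf x)\mp\mathbf a_i^\top\mathbf v\ge a_i-a_i=0$. Hence $A(\mathbf x\pm\mathbf v)\le\mathbf b+\tfrac12\mathbf 1$. This is the CHKM reflection calculation, with the lower threshold $a_i$ playing the role of their dyadic lower end.

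\emph{Proof of \eqref{eq:long}.} Let $\mathbf x,\mathbf y\in C\cap\Z^d$ and $|k|\le\ell$. The set $D_C$ is convex and centrally symmetric and contains $0$, so it is closed under scaling by factors in $[-1,1]$. Therefore
\[
k(\mathbf y-\mathbf x)=\tfrac{k}{\ell}\,\ell(\mathbf y-\mathbf x)\in D_C .
\]
The point $\mathbf x+k(\mathbf y-\mathbf x)$ is then in $C+D_C\subseteq\widetilde P$. It is integral because $\mathbf x,\mathbf y,k$ are integral, and $\widetilde P\cap\Z^d=P\cap\Z^d$ since $A,\mathbf b$ are integral. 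This gives the claim.

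\emph{Main obstacle.} The only delicate point is the cell description. One must check that each closed chamber really lies between \emph{successive} thresholds for every constraint, including the degenerate case of parallel rows. Parallel rows give no trouble: each index $i$ is still treated separately, and the interior of a chamber meets no threshold hyperplane of constraint $i$.

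A second point is that \eqref{eq:width} is needed on all intervals, including $[1/4,1/2]$, $[1/2,1]$, and the top interval ending at $2^\Phi$. There the spacing is $2^k/\ell$ with $a\ge 2^k$, as already noted in the text. Once these are checked, everything else is linear bookkeeping.
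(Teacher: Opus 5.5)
Your proof is correct and follows the paper's argument. Both rest on the CHKM reflection calculation with the refined width bound $|s_i(\mathbf y)-s_i(\mathbf x)|\le c_i-a_i\le a_i/\ell$, with integrality then giving $\widetilde P\cap\Z^d=P\cap\Z^d$; you spell out details the paper leaves terse, namely the cell description and scaling $\ell(\mathbf y-\mathbf x)\in D_C$ by $k/\ell$ using convexity and symmetry of $D_C$, where the paper instead bounds the slack change directly by $s_i(\mathbf x)/\ell$.
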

\begin{proof}
The CHKM reflection calculation~\cite[p.~30]{CHKM} applies with the refined width:
\[
 |s_i(\mathbf y)-s_i(\mathbf{x})|\le c_i-a_i\le a_i/\ell
 \le s_i(\mathbf{x})/\ell.
\]
This gives both inclusions and the real progression in $\widetilde P$. Its integer samples belong to $P$ because the shift preserves integer feasibility. Boundedness of $D_C$ follows from $A\mathbf v=0\Rightarrow\mathbf v=0$.
\end{proof}

\begin{remark}[Relative slacks characterize extrapolation]\label{rem:slack-characterization}
Let $\mathbf{x},\mathbf y\in\widetilde P$, set $\mathbf v=\mathbf y-\mathbf{x}$, and let $\ell>0$. The segments $\mathbf{x}+[-\ell,\ell]\mathbf v$ and $\mathbf y+[-\ell,\ell]\mathbf v$, centered at $\mathbf{x}$ and $\mathbf y$ respectively, both lie in $\widetilde P$ if and only if, for every row $i$,
\[
 \ell\,|s_i(\mathbf y)-s_i(\mathbf{x})|
 \le\min\{s_i(\mathbf{x}),s_i(\mathbf y)\}.
\]
Indeed, the minimum slack along the first segment is $s_i(\mathbf{x})-\ell|s_i(\mathbf y)-s_i(\mathbf{x})|$, and similarly for the second. For positive slacks the condition is equivalently
\[
 \frac{\max\{s_i(\mathbf{x}),s_i(\mathbf y)\}}
 {\min\{s_i(\mathbf{x}),s_i(\mathbf y)\}}\le1+\frac1\ell;
\]
if either slack is zero, both must be zero. Our cells have positive shifted slacks, so this last case never arises. Thus multiplicative slack control is exactly the geometry needed for extrapolation about both points. Refined dyadic intervals enforce this condition with rational inequalities of polynomial encoding size, and the integer slack bounds limit the number of dyadic intervals needed. The condition is independent of the objective: Proposition~\ref{prop:extrema} turns the resulting cover into a simultaneous approximation guarantee for every polynomial of degree at most $D$.
\end{remark}

The integer progression property is the geometric input to approximation. Closed chambers are defined by non-strict linear inequalities; rounding the slack bounds below removes threshold denominators without changing the integer points.

\begin{lemma}[Number, encoding, and enumeration of cells]\label{lem:arrangement}
The closed cells can be enumerated deterministically in $(\varphi\ell)^{O(d)}$ bit operations, and their number is at most $(\varphi\ell)^{O(d)}$. Each cell has a description by at most $2m$ non-strict linear inequalities of total encoding size $\poly(\varphi,\log(\ell+1))$, hence $\poly(\varphi)$ for the prescribed refinement~\eqref{eq:HT}. Rounding its slack bounds as specified below gives a rational polytope $\widehat C\subseteq C$ with exactly the same integer points, described by at most $2m$ inequalities whose coefficient bit lengths are $O(\tau+\Phi+1)$, independent of $\ell$.
\end{lemma}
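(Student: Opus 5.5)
The plan is to treat $\mathcal C$ as the set of full-dimensional chambers of a hyperplane arrangement in $\R^d$ and to enumerate them with standard arrangement machinery. The arrangement consists of the $H=m((\Phi+2)\ell+1)$ hyperplanes $s_i(\mathbf{x})=a$, $a\in\mathcal B_{\Phi,\ell}$, from~\eqref{eq:HT}; the bounding hyperplanes $s_i=1/4$ and $s_i=2^\Phi$ of $Q$ are among them.

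\textbf{Step 1: counting.} Every full-dimensional chamber of the whole arrangement that lies inside $Q$ is a cell. An arrangement of $H$ hyperplanes in $\R^d$ has at most $R_d(H)\le(1+H)^d$ full-dimensional chambers by~\eqref{eq:F}. We have $m\le\varphi$, and $\Phi=O(d^3\tau)$ with $\tau=O(\varphi)$ because normalized entries have $O(\varphi)$ bits. Hence $H\le\poly(\varphi)\,\ell$ for fixed $d$, and the count $(\varphi\ell)^{O(d)}$ follows.

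\textbf{Step 2: description of a cell.} Inside $Q$, the $m$ families are parallel, since each consists of the level sets of $s_i$. A chamber therefore selects, for each $i$, one pair of successive thresholds $a_i<c_i$. It is described by the $2m$ inequalities $a_i\le s_i(\mathbf{x})\le c_i$, which is the form used in Lemma~\ref{lem:reflection}. Each threshold $2^k(1+j/\ell)$ is a rational with numerator and denominator of $O(\Phi+\log(\ell+1))$ bits. The total size is therefore $\poly(\varphi,\log(\ell+1))$, which is $\poly(\varphi)$ because $\log\ell=O(\log(D/\eps))\le O(\varphi)$.

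\textbf{Step 3: rounding.} For integer $\mathbf{x}$ the quantity $\mathbf a_i^\top\mathbf{x}$ is an integer, and the constraint $a_i\le b_i+\tfrac12-\mathbf a_i^\top\mathbf{x}\le c_i$ is equivalent to
\[
\lceil b_i+\tfrac12-c_i\rceil\le\mathbf a_i^\top\mathbf{x}\le\lfloor b_i+\tfrac12-a_i\rfloor.
\]
Replacing the bounds by these integers gives $\widehat C\subseteq C$ with exactly the same integer points. The new right-hand sides are bounded by $|b_i|+2^\Phi+1$, so their bit lengths are $O(\tau+\Phi+1)$ and do not depend on $\ell$.

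\textbf{Step 4: enumeration, the main obstacle.} We must list exactly the nonempty full-dimensional chambers in time proportional to their number, not by trying all $((\Phi+2)\ell)^m$ threshold choices, since $m$ is not fixed. I would use an incremental or recursive construction. One option is to insert hyperplanes one at a time, in the spirit of Edelsbrunner--O'Rourke--Seidel, splitting each current chamber by an LP test for whether the new hyperplane meets its interior. Another option is a reverse search or BFS over chambers, where neighbours differ in one threshold interval and each candidate is tested for full-dimensionality by an LP that maximizes a margin $\delta$ subject to $a_i+\delta\le s_i\le c_i-\delta$.

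Each LP has $O(m)$ constraints of $\poly(\varphi,\log\ell)$ bits and is solved in polynomial time. A chamber has at most $2m$ facets, so it has at most $2m$ neighbours. Every chamber of the arrangement restricted to $Q$ is reachable because the adjacency graph of a convex region's chambers is connected. The total work is (number of chambers)$\cdot\poly(\varphi,\log\ell)$, which is $(\varphi\ell)^{O(d)}$.

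The delicate points are these:
\begin{itemize}
\item a starting chamber must be found, for example by taking the chamber containing a generic interior point of $Q$, or by declaring $\mathcal C=\varnothing$ when the LP margin is zero;
\item duplicates must be removed using the canonical label $(a_i)_i$;
\item when $Q$ is lower dimensional, every margin LP returns zero and the empty family is reported.
\end{itemize}
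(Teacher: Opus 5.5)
Your Steps 1--3 follow the paper's argument. You use the same $R_d(H)$ chamber count; since the bounding hyperplanes of $Q$ are themselves thresholds, the cells are chambers of the full arrangement. You also use the same description by $2m$ slack bounds with thresholds of $O(\Phi+\log(\ell+1))$ bits, and the same rounding to integer slack bounds.

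Your first enumeration option is exactly the paper's proof. You insert the threshold hyperplanes one at a time, split each current closed cell into its two closed halfspace intersections, and keep those with positive LP margin. Each stage has at most $R_d(H)$ cells, so this uses $O((H+1)R_d(H))$ LP tests, each of size $\poly(\varphi,\log(\ell+1))$. That is the count you should record, because your cost paragraph is written for the BFS option instead.

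The BFS option does not work as stated. Neighbours across a facet need not differ in a single threshold interval, because parallel rows can produce coincident threshold hyperplanes. For example, take $d=1$ and $P=[0,1]$, given by $x\le1$ and $-x\le0$. Then $s_1=\tfrac32-x$ and $s_2=x+\tfrac12$, and the threshold $1$ gives the point $x=\tfrac12$ for both rows. The two chambers meeting at $x=\tfrac12$ differ in both interval indices. The two single-index changes of one label that point toward the other chamber both give only $\{\tfrac12\}$, which has zero margin. The chambers of a line are linearly ordered, so your search never crosses $x=\tfrac12$. Coincidences like this are common with parallel rows such as box constraints.

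You can repair the search in either of two ways. One is to cross each facet found by LP and update every row whose threshold hyperplane contains that facet. The other is to merge parallel rows into a single family before searching. The repaired search would use $O(m|\mathcal C|)$ LP tests rather than $O((H+1)R_d(H))$, which does not change the $(\varphi\ell)^{O(d)}$ bound. The insertion method needs no such care: a hyperplane that coincides with an earlier one only splits off a zero-margin half, which is discarded.
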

\begin{proof}
With $H=m((\Phi+2)\ell+1)$, the standard arrangement bound gives $R_d(H)$ chambers~\cite{Stanley}; intersecting with convex $Q$ cannot increase their number. The thresholds have denominator dividing $4\ell$ and numerator bit length $O(\Phi+\log(\ell+1))$.

Incrementally split each closed cell into its two closed halfspace intersections, retaining only those with nonempty interior. These cover the parent, including the splitting hyperplane. At most $R_d(H)$ cells occur at each stage. Their constraints compress to two bounds per original row. Interior is tested by maximizing $\delta\in[0,1]$ with a common margin $\delta$ in every inequality; positivity is equivalent to nonempty interior since zero normals were removed. Thus $O((H+1)R_d(H))$ rational LP tests suffice. Since $m,d,\tau=O(\varphi)$ and $\Phi=O(d^3(\tau+1))$, each test has encoding size $\poly(\varphi,\log(\ell+1))$ and the total enumeration cost is $(\varphi\ell)^{O(d)}$. For~\eqref{eq:HT}, $\log(\ell+1)=O(\log(D+1)+\langle\eps\rangle)$, giving the stated $\poly(\varphi)$ size under unary exponent encoding.

Finally, replace $a\le s_i(\mathbf{x})\le c$ by
\[
 \lceil a-\tfrac12\rceil\le b_i-\mathbf a_i^\top\mathbf{x}
 \le\lfloor c-\tfrac12\rfloor.
\]
These inequalities define $\widehat C\subseteq C$ with $\widehat C\cap\Z^d=C\cap\Z^d$, and remove the dependence on $\ell$ from the coefficient bit lengths. The extended version gives the enumeration details.
\end{proof}

\begin{remark}[Parallel families]
If row $i$ contributes $q_i$ distinct threshold hyperplanes, the count can be sharpened to
\[
 |\mathcal C|\le
 \sum_{\substack{I\subseteq\{1,\ldots,m\}\\|I|\le d}}
 \prod_{i\in I}q_i
 \le\sum_{j=0}^d\binom mj((\Phi+2)\ell+1)^j.
\]
The insertion argument counts intersections involving at most one hyperplane from each parallel family; further dependencies can only reduce the count. This improves the bound without changing its worst-case $\ell^d$ dependence. We retain $R_d(H)$ for simplicity. The boundary-vertex count in~\cite[pp.~30--31]{CHKM} bounds integer-hull vertices, not all reflecting cells, and therefore does not directly replace this count.
\end{remark}

We next combine the sample-to-supremum argument used in polynomial norming meshes~\cite[proof of Proposition~2]{PV} with a second-difference bound at a discrete minimum. Long progressions control variation between integer samples without assuming continuous stationarity.

We use the classical Markov inequality~\cite[Theorem~5.1.8]{BorweinErdelyi}: for a real polynomial $q$ of degree at most $D$ and an interval $[u,v]$ with $u<v$,
\[
 \max_{u\le t\le v}|q'(t)|
 \le\frac{2D^2}{v-u}\max_{u\le t\le v}|q(t)|.
\]
Thus the largest absolute derivative is bounded by the largest absolute polynomial value, multiplied by a factor depending only on the degree and interval length.

Bounds between equally spaced samples go back to Ehlich and Zeller~\cite{EhlichZeller}; Coppersmith and Rivlin~\cite{CoppersmithRivlin} quantify their dependence on degree and sample count, with sharp fixed-ratio asymptotics given by Kuijlaars~\cite{Kuijlaars}. The reduction of symmetric-node problems to polynomials in $t^2$ also appears in Rivlin~\cite[p.~539]{RivlinSurvey}, for coefficient bounds. We retain the elementary proof below for the discrete-minimum estimate needed here; these references concern its classical ingredients.

\begin{lemma}[Approximation at a discrete minimum]\label{lem:markov}
Let $D\ge1$ and $\ell\ge D^2$ be integers, and let $p\in\R[t]$ have degree at most $D$. Suppose
\[
 a\le p(k)\le b\quad(k=-\ell,\ldots,\ell),\qquad p(-1)\ge p(0).
\]
Then
\[
 p(1)-p(0)\le \frac{D^2}{\ell^2}(b-a).
\]
\end{lemma}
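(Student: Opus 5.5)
The plan is to reduce to an even polynomial and then apply Markov's inequality in the variable $s=t^2$. There the integer samples $k^2$ have gaps of order $\ell$ on an interval of length $\ell^2$.

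\emph{Step 1: reduction to the even part.} Put
\[
g(t)=\tfrac12\bigl(p(t)+p(-t)\bigr)-p(0).
\]
Then $p(1)-p(0)=g(1)+\tfrac12\bigl(p(1)-p(-1)\bigr)$. The hypothesis $p(-1)\ge p(0)$ gives $\tfrac12(p(1)-p(-1))\le \tfrac12(p(1)-p(0))$, and rearranging yields
\[
p(1)-p(0)\le 2g(1).
\]
Since $g$ is even with $g(0)=0$, we can write $g(t)=h(t^2)$ with $h\in\R[s]$ of degree at most $n:=\lfloor D/2\rfloor$ and $h(0)=0$. If $n=0$ then $g\equiv0$ and we are done, so assume $n\ge1$. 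Each value $g(k)+p(0)=\tfrac12(p(k)+p(-k))$ lies in $[a,b]$. Hence, with $c=\tfrac{a+b}2-p(0)$, we have $|h(k^2)-c|\le \tfrac12(b-a)$ for $k=0,\ldots,\ell$.

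\emph{Step 2: from samples to a uniform bound (the main step).} Let $M=\max_{0\le s\le\ell^2}|h(s)-c|$, attained at some $s^*$. The nodes $0,1,4,\ldots,\ell^2$ have consecutive gaps $2k-1\le 2\ell-1$. Therefore $s^*$ lies within distance $\ell-\tfrac12$ of some node $k^2$. Markov's inequality on $[0,\ell^2]$, applied to $h-c$ of degree at most $n$, gives
\[
\max_{[0,\ell^2]}|h'|\le \frac{2n^2}{\ell^2}M\le\frac{D^2}{2\ell^2}M.
\]
Consequently
\[
M\le \tfrac12(b-a)+\Bigl(\ell-\tfrac12\Bigr)\frac{D^2}{2\ell^2}M\le \tfrac12(b-a)+\frac{D^2}{2\ell}M\le \tfrac12(b-a)+\tfrac12 M,
\]
where the last inequality uses $\ell\ge D^2$. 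Hence $M\le b-a$. This absorption step is where the hypothesis $\ell\ge D^2$ is used, and it is the delicate point. It is also why we bound $h-c$ rather than $h$: centering at the midpoint $c$ is what makes the constant come out exactly as stated.

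\emph{Step 3: conclusion.} By the mean value theorem and the derivative bound from Step 2,
\[
g(1)=h(1)-h(0)\le \max_{[0,1]}|h'|\le \frac{D^2}{2\ell^2}M\le \frac{D^2}{2\ell^2}(b-a).
\]
Combining this with Step 1 gives
\[
p(1)-p(0)\le 2g(1)\le \frac{D^2}{\ell^2}(b-a),
\]
as required.
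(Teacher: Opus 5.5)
Your proof is correct. It shares the paper's skeleton: the inequality $p(1)-p(0)\le p(1)+p(-1)-2p(0)=2(h(1)-h(0))$, centering at the midpoint $(a+b)/2$, and Markov's inequality for $h$ on $[0,\ell^2]$ to finish.

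The genuine difference is where you do the sample-to-uniform absorption. The paper does it for the full polynomial $q=p-\tfrac{a+b}{2}$ on $[-\ell,\ell]$ with unit-spaced nodes, where the Markov factor is $D^2/\ell$ and every point is within $\tfrac12$ of a node. It concludes $\max_{[-\ell,\ell]}|q|\le b-a$, and only then passes to the even part and invokes Markov a second time. You do the absorption directly in the variable $s=t^2$ at the square nodes $k^2$. The gaps grow to $2\ell-1$, but the Markov factor for $h$ on $[0,\ell^2]$ shrinks to $2n^2/\ell^2$, so the product is still at most $D^2/(2\ell)\le\tfrac12$.

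Your version therefore needs only one Markov estimate, reused for both the absorption and the final derivative bound. It also shows that only the symmetrized samples $\tfrac12\bigl(p(k)+p(-k)\bigr)$ need to lie in $[a,b]$. The paper's version instead yields, as a by-product, a uniform bound for $p$ itself on all of $[-\ell,\ell]$. That is the equally spaced norming-mesh estimate, in the spirit of Piazzon--Vianello, that the paper is connecting to.
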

\begin{proof}
Set $q=p-(a+b)/2$ and $W=\max_{-\ell\le t\le\ell}|q(t)|$. Markov's inequality gives $|q'(t)|\le D^2W/\ell$ for every $t\in[-\ell,\ell]$. Every point of $[-\ell,\ell]$ lies within $1/2$ of a sampled integer, so
\[
 W\le \frac{b-a}{2}+\frac{D^2}{2\ell}W\le\frac{b-a+W}{2},
 \qquad W\le b-a.
\]
Let $n=\lfloor D/2\rfloor$. The even part of $q$ has the form
\[
 e(t)=\frac{q(t)+q(-t)}2=h(t^2),\qquad \deg h\le n.
\]
Thus $|h(t)|\le W$ on $[0,\ell^2]$. Markov's inequality gives $|h'(t)|\le 2n^2W/\ell^2$ throughout that interval. Since $\ell\ge1$, the mean value bound on $[0,1]$ and $p(-1)\ge p(0)$ yield
\[
\begin{aligned}
 p(1)-p(0)&\le p(1)+p(-1)-2p(0)\\
 &=2(h(1)-h(0))\\
 &\le\frac{4\lfloor D/2\rfloor^2}{\ell^2}(b-a)
 \le\frac{D^2}{\ell^2}(b-a).
\end{aligned}
\]
For $D=1$, $h$ is constant and the bound is zero. The argument also covers $a=b$.
\end{proof}

The estimate is needed only on the cell containing a global minimizer, and holds for any representative of that cell. Applying it also to $-f$ gives the following simultaneous approximation of both extrema. The general conversion from a polynomial norming inequality to range-relative optimization is given in~\cite[Proposition~1]{PV}; here we use the discrete-minimum estimate directly. The reflecting geometry supplies the feasible integer samples needed here.

\begin{proposition}[Representatives approximate extrema]\label{prop:extrema}
Choose one integer representative from each integer-feasible closed cell, forming $Y$. If $P\cap\Z^d\ne\varnothing$ and $\ell\ge D^2$, then every real polynomial $f$ of degree at most $D$ satisfies
\[
\begin{aligned}
 0&\le\min_Y f-\min_{P\cap\Z^d} f\le\frac{D^2}{\ell^2}(f_{\max}-f_{\min}),\\
 0&\le\max_{P\cap\Z^d} f-\max_Y f\le\frac{D^2}{\ell^2}(f_{\max}-f_{\min}).
\end{aligned}
\]
\end{proposition}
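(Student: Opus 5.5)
The proof reduces the minimum estimate to Lemma~\ref{lem:markov} along an integer progression supplied by Lemma~\ref{lem:reflection}, and then obtains the maximum estimate by applying the same argument to $-f$.

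The lower bounds $0\le\min_Y f-\min_{P\cap\Z^d}f$ and $0\le\max_{P\cap\Z^d}f-\max_Y f$ are immediate from $Y\subseteq P\cap\Z^d$. Here each representative lies in a closed cell, and $C\cap\Z^d\subseteq P\cap\Z^d$ because $C\subseteq Q\subseteq\widetilde P$ and $\widetilde P\cap\Z^d=P\cap\Z^d$.

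For the upper bound on the minimum, let $\mathbf{x}_{\min}$ be an integer minimizer. By~\eqref{eq:integer-bounds} it lies in $Q$, hence in some closed cell $C\in\mathcal C$. That cell is integer-feasible, so $Y$ contains a representative $\mathbf y\in C\cap\Z^d$.

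Set $\mathbf v=\mathbf y-\mathbf{x}_{\min}$ and $p(t)=f(\mathbf{x}_{\min}+t\mathbf v)$, a real univariate polynomial of degree at most $D$. By~\eqref{eq:long} in Lemma~\ref{lem:reflection}, every point $\mathbf{x}_{\min}+k\mathbf v$ with $|k|\le\ell$ lies in $P\cap\Z^d$. Hence $f_{\min}\le p(k)\le f_{\max}$ for $k=-\ell,\dots,\ell$. Since $\mathbf{x}_{\min}$ is a minimizer and $\mathbf{x}_{\min}-\mathbf v$ is feasible, $p(-1)\ge p(0)$.

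Lemma~\ref{lem:markov}, with $a=f_{\min}$ and $b=f_{\max}$, then gives
\[
 f(\mathbf y)-f_{\min}=p(1)-p(0)\le\frac{D^2}{\ell^2}(f_{\max}-f_{\min}).
\]
Because $\min_Y f\le f(\mathbf y)$, the first claim follows. The degenerate case $\mathbf v=0$ is trivial.

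For the maximum, apply the same argument to $-f$. This polynomial has the same degree bound, its minimizer is $\mathbf{x}_{\max}$, and its range $f_{\max}-f_{\min}$ is unchanged.

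The only delicate step is confirming that the hypotheses of Lemma~\ref{lem:markov} hold. First, the samples must be genuine feasible integer points, so that both bounds $a,b$ apply; this is exactly what the reflection property and the half-shift provide. Second, $p(-1)\ge p(0)$ must use discrete optimality at the feasible neighbor $\mathbf{x}_{\min}-\mathbf v$, not continuous stationarity. Finally, the representative need not coincide with $\mathbf{x}_{\min}$, but the bound holds for an arbitrary $\mathbf y$ in the cell. This is why $Y$ works simultaneously for every polynomial of degree at most $D$.
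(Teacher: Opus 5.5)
Your proposal is correct and follows the paper's proof essentially step for step. You take a cell containing $\mathbf{x}_{\min}$, restrict $f$ to the integer progression through its representative given by Lemma~\ref{lem:reflection}, apply Lemma~\ref{lem:markov} with $a=f_{\min}$, $b=f_{\max}$ (using discrete minimality for $p(-1)\ge p(0)$), and pass to $-f$ for the maximum; you also spell out two points the paper leaves implicit, namely the lower bounds from $Y\subseteq P\cap\Z^d$ and why a cell containing $\mathbf{x}_{\min}$ exists.
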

\begin{proof}
Let $\mathbf{x}_{\min}$ minimize $f$ on $P\cap\Z^d$, and let $\mathbf{y}$ represent a cell containing it. Put $p(t)=f(\mathbf{x}_{\min}+t(\mathbf{y}-\mathbf{x}_{\min}))$. Lemma~\ref{lem:reflection} makes all integer samples with $|k|\le \ell$ feasible, so they lie between $f_{\min}$ and $f_{\max}$. Minimality gives $p(-1)\ge p(0)$. Lemma~\ref{lem:markov} now bounds $f(\mathbf{y})-f(\mathbf{x}_{\min})$. Apply the same argument to $-f$ for the maximum.
\end{proof}

\section{Algorithm and complexity}\label{sec:algorithm}

Algorithm~\ref{alg:sampling} enumerates the refined cells and compares their representatives. It accepts any degree bound $D\ge\max\{1,\deg f\}$; Theorem~\ref{thm:main} computes this bound from $f$. A constant objective requires only one integer feasibility query, and dimension zero is immediate. Representatives may be processed as a stream or saved for reuse.

\begin{algorithm}[tbp]
\centering
\begin{tikzpicture}
\node[draw=cellblue!65,rounded corners=2pt,fill=cellblue!3,inner sep=7pt] {
\begin{minipage}{.91\columnwidth}\small
\textbf{Refined-cell sampling}\par\smallskip
\textbf{Input:} bounded rational $P$, polynomial $f$, degree bound $D\ge\max\{1,\deg f\}$, accuracy $0<\eps\le1$.
\begin{enumerate}[leftmargin=1.5em,itemsep=2pt,topsep=4pt]
\item Clear row denominators. If $P=\varnothing$, return \textsf{INFEASIBLE}; otherwise remove zero constraint rows. If $f$ is constant, solve integer feasibility and return a feasible point or \textsf{INFEASIBLE}.
\item Form the shifted slacks, compute $\Phi$, set $\ell=\max\{D^2,\lceil D/\sqrt\eps\rceil\}$, and construct the thresholds~\eqref{eq:thresholds}.
\item Enumerate closed chambers in $Q$, obtaining cells $C$. Form $\widehat C$ by rounding slack bounds; if integer-feasible, obtain $\mathbf z_C\in\widehat C\cap\Z^d$.
\item If no representative exists, return \textsf{INFEASIBLE}; otherwise return one minimizing $f(\mathbf z_C)$, evaluated exactly.
\end{enumerate}
\end{minipage}};
\end{tikzpicture}
\caption{The full sampling construction depends only on $P,D,\eps$. The constant-objective shortcut returns one feasible point without constructing the reusable representative set.}
\label{alg:sampling}
\end{algorithm}

\begin{proof}[Correctness in Theorem~\ref{thm:main}]
The constant-objective shortcut is exact. Otherwise, every integer-feasible cell contributes a feasible representative, and the closed cells cover $P\cap\Z^d$. Thus $Y=\varnothing$ if and only if $P\cap\Z^d=\varnothing$. Otherwise, Proposition~\ref{prop:extrema} and $\ell=\max\{D^2,\lceil D/\sqrt\eps\rceil\}$ give
\[
 0\le\min_Y f-f_{\min}\le \frac{D^2}{\ell^2}(f_{\max}-f_{\min})\le\eps (f_{\max}-f_{\min}).
\]
The algorithm returns a minimizer on $Y$. In the full sampling construction, the cover and representatives depend only on $P,D,\eps$, so the same set works for every polynomial of degree at most $D$.
\end{proof}

It remains to account for bit operations. There are three costs: constructing the arrangement, producing integer representatives, and evaluating the objective. The distinction between the number of cells and the sizes of their descriptions is important: refinement increases the former polynomially in $1/\sqrt\eps$, whereas it increases the threshold bit lengths only logarithmically.
Lemma~\ref{lem:arrangement} bounds the chamber enumeration cost. The bit length of $\ell$ is $O(\log(D+1)+\langle\eps\rangle)$; compute the least positive integer $k$ satisfying $\eps k^2\ge D^2$ by binary search with rational comparisons, and set $\ell=\max\{D^2,k\}$. Generating all thresholds uses $O(\Phi\ell)$ arithmetic operations on integers with $O(\Phi+\log(\ell+1))$ bits and is included in the arrangement bound when $m\ge1$. A nonempty bounded polyhedron in positive dimension cannot be specified by zero inequalities.

Once a cell has been enumerated, its integer feasibility query has a description whose coefficient sizes do not grow with the refinement parameter.
Each of at most $R_d(H)$ queries has $d$ integer variables and a total encoding size
\begin{equation}\label{eq:ilp-size}
 S_C=O\!\left(m(d+1)(\tau+\Phi+1)+d+m\right).
\end{equation}
Fixed-dimensional integer linear feasibility with witness production takes $2^{O(d\log(d+1))}\poly(S_C)$ bit operations using Kannan's algorithm~\cite{Kannan}, improving the dimension dependence of Lenstra's algorithm~\cite{Lenstra}. Every integer point returned from a shifted cell belongs to the original bounded polytope. Accordingly, every returned coordinate has at most $\Phi+O(1)$ bits by~\eqref{eq:integer-bounds}; its bit length grows logarithmically with its magnitude. Notice also that~\eqref{eq:ilp-size} has no dependence on $\ell$ other than the number of cells, because the threshold inequalities have already been rounded to integer slack bounds.

Finally, the coordinate bound ensures that exact objective evaluation also has polynomial bit cost, even though feasible coordinates can be numerically large.
The integer coefficients of $f$ have bit length at most $\langle f\rangle$. If $\mathbf{x}\in P\cap\Z^d$, every monomial has absolute value at most $2^{D\Phi}$. Each value of $f(\mathbf{x})$, and the intermediates obtained by evaluating monomials and summing them, has $O(\langle f\rangle+D\Phi+\log(|\mathcal A|+1))$ bits. Repeated squaring evaluates the explicit exponent vectors in a number of arithmetic operations polynomial in $\langle f\rangle+d+\log(D+1)$. Even schoolbook integer arithmetic therefore gives
\begin{equation}\label{eq:eval}
 \poly(\langle f\rangle+d+D\Phi)
\end{equation}
bit operations per representative. The case of the zero polynomial is immediate. Returning an exact integer objective value also fits this bound. No unit-cost assumption on rational arithmetic is used.

Combining enumeration, integer feasibility, and evaluation gives the bit-operation bound
\[
\begin{aligned}
 T\le{}&O((H+1)R_d(H))\,
          \poly(\varphi,\log(\ell+1))\\
       &+R_d(H)\,2^{O(d\log(d+1))}\poly(\varphi)\\
       &+R_d(H)\,\poly(\langle f\rangle+d+D\Phi).
\end{aligned}
\]

By Lemma~\ref{lem:arrangement}, the number of cells and enumeration steps is $(\varphi\ell)^{O(d)}$, since $m,d,\tau=O(\varphi)$ and $\Phi=O(d^3(\tau+1))$. Under our unary-exponent encoding, $D=O(\varphi)$ and all auxiliary bit lengths are polynomially bounded in $\varphi$. Each enumeration test and objective evaluation therefore has polynomial bit cost, and each integer-feasibility query takes $2^{O(d\log(d+1))}\poly(\varphi)$ bit operations. Substituting $\ell=\max\{D^2,\lceil D/\sqrt\eps\rceil\}$ gives the total running time $(\varphi D/\eps)^{O(d)}$, proving Theorem~\ref{thm:main}. For binary-encoded exponents, the per-query and evaluation bounds instead retain a factor $\poly(\varphi,D)$, which is also absorbed into this total bound.

For fixed $P,D,d$, there are $O(\eps^{-d/2})$ integer-feasibility queries and $O(\eps^{-(d+1)/2})$ LP enumeration tests. These are call counts, not bit-operation bounds. Objective coefficients enter only through exact evaluation~\eqref{eq:eval}. Normalization preserves polynomial encoding size, so the bound also applies to rational inequalities.

\begin{proof}[Proof of Corollary~\ref{cor:max}]
Applying Proposition~\ref{prop:extrema} to $-f$ gives $f_{\max}-f(\mathbf{x}_{\eps})\le\eps (f_{\max}-f_{\min})$. Nonnegativity on $P\cap\Z^d$ implies $f_{\max}-f_{\min}\le f_{\max}$.
\end{proof}

\begin{proof}[Proof of Corollary~\ref{cor:mixed}]
If $d_c=0$, apply Theorem~\ref{thm:main}. Otherwise let $S=P\cap(\R^{d_c}\times\Z^{d_i})$. By~\cite[Lemma~9]{DHKW08}, an integer $\Delta>0$ of polynomial encoding size in fixed dimension can be computed such that every nonempty slice $\Delta P_{\mathbf z}$ is integral. Our modification begins with their barycentric rounding~\cite[Lemma~10]{DHKW08}: instead of rounding down all but one weight, we round up all but a largest weight. This gives the controlled feasible segment below, on which Markov's inequality replaces their coefficient-based Lipschitz estimate~\cite[Lemma~11]{DHKW08}. Set
\[
 k=\left\lceil\frac{4D^2d_c(d_c+1)}{\eps}\right\rceil,
 \qquad Q=k\Delta,\qquad t=\frac{d_c(d_c+1)}k.
\]
For any $\mathbf x\in P_{\mathbf z}$, Carath\'eodory's theorem gives $\mathbf x=\sum_{i=0}^{d_c}\lambda_i\mathbf v_i$ for slice vertices $\mathbf v_i\in\Delta^{-1}\Z^{d_c}$ and convex weights, indexed so that $\lambda_0\ge1/(d_c+1)$. Put
\[
 \mu_i=\frac{\lceil k\lambda_i\rceil}{k}\ (i\ge1),
 \qquad \mu_0=1-\sum_{i=1}^{d_c}\mu_i.
\]
For $i\ge1$, $\lambda_i\le\mu_i\le\lambda_i+1/k$, while
$\mu_0\ge\lambda_0-d_c/k\ge(1-t)\lambda_0$. Since $0<t\le1$, all weights are nonnegative and $\mu_i\ge(1-t)\lambda_i$. Therefore
\[
 \mathbf y=\sum_{i=0}^{d_c}\mu_i\mathbf v_i
 =(1-t)\mathbf x+t\mathbf w\in P_{\mathbf z}\cap Q^{-1}\Z^{d_c}
\]
for some $\mathbf w\in P_{\mathbf z}$: its convex weights are $(\mu_i-(1-t)\lambda_i)/t$.

Write $R=f_{\max}-f_{\min}$ over $S$. The polynomial
$p(u)=f((1-u)\mathbf x+u\mathbf w,\mathbf z)-f_{\min}$ has degree at most $D$ and lies in $[0,R]$ on $[0,1]$. Markov's inequality gives
\[
 |f(\mathbf y,\mathbf z)-f(\mathbf x,\mathbf z)|
 \le2D^2tR\le\eps R/2.
\]
Thus the grid preserves nonemptiness and approximates the minimum within $\eps R/2$, including when $R=0$. Apply Theorem~\ref{thm:main} on the grid with accuracy $\eps/2$. Its objective range is at most $R$, so the total error is at most $\eps R$.

Substitute $\mathbf x=\mathbf u/Q$ and use the integer polynomial $g(\mathbf u,\mathbf z)=Q^D f(\mathbf u/Q,\mathbf z)$. Its degree and support are unchanged, and coefficient bit lengths increase by $O(D\log(Q+1))$. Since
\[
 \log Q=\log\Delta+
 O\!\left(\log(D+1)+\log(d_c+1)+\log(1/\eps)\right),
\]
positive scaling preserves the guarantee; negation and nonnegativity give the maximization claim. For explicit complexity, normalize the inequalities to integers and let $\tau$ bound their coefficient bit lengths. DHKW construct $\Delta$ as the least common multiple of nonzero absolute determinants of $d_c$-row bases of the continuous-variable matrix~\cite[proof of Lemma~9]{DHKW08}. Hadamard's inequality gives
\[
 \log\Delta=O\!\left(\binom{m}{d_c} d_c(\tau+\log(d_c+1))\right).
\]
Enumerating these bases and computing their least common multiple takes $\varphi^{O(d_c+1)}$ bit operations; normalization changes encoding size only polynomially. The scaled input has size at most $\varphi^{O(d_c+1)}$ under our unary-exponent convention. Applying Theorem~\ref{thm:main} proves~\eqref{eq:mixed-runtime}. For binary exponents, the extra polynomial dependence on $D$ is likewise absorbed into the displayed bound.

The new grid multiplier avoids the coefficient Lipschitz constant and positive range-gap bound in DHKW's reduction. This sharpens the sufficient grid resolution, but does not alone establish a better overall asymptotic runtime: the integer solvers depend on $\log Q$, and the common denominator $\Delta$ remains.
\end{proof}

\section{Discussion and extensions}\label{sec:conclusion}
For any refinement $\ell\ge1$, the representatives include every vertex of the integer hull. Indeed, if such a vertex $\mathbf v$ shared a cell with a distinct integer point $\mathbf y$, reflection would make $2\mathbf v-\mathbf y$ feasible, contradicting extremality of $\mathbf v=(\mathbf y+(2\mathbf v-\mathbf y))/2$. Thus every linear objective is optimized exactly.

The reflecting cover is independent of objective coefficients, so its representatives can be reused for any polynomial of degree at most $D$. Multiplicative slack bins connect the construction to approximate Pareto methods~\cite{PY} and affine slicing~\cite{HWZ}; Markov-based norming meshes provide the continuous sampling analogue~\cite[Propositions~1--3]{PV}. Our error certificate uses feasible integer samples and the integer objective range.

CHKM also give a lattice-point counting FPTAS~\cite[Theorem~3.1]{CHKM}, combining volume estimates with recursion on lower-dimensional slices. Replacing volume estimates by Markov-controlled sampling suggests another route to polynomial optimization, but its guarantee and bit complexity require further analysis.

More generally, if $|f-h_C|\le\delta_C$ on each feasible cell and a candidate minimizes $h_C$ within $\eta_C$, choosing the best candidate gives error at most $2\delta_{C^*}+\eta_{C^*}$, where $C^*$ contains an optimum. This follows by comparing through $h_{C^*}$ and applying the two residual bounds. An FPTAS additionally requires polynomial construction, local computation, and encoding costs.

For $f(\mathbf{x})=g(B\mathbf{x})+\mathbf c^\top\mathbf{x}$ with few rows in $B$, affine optimization might handle the remaining directions. A rank-sensitive cover with sufficient feasible samples remains open; changing the oracle alone does not reduce our cell count. Nonpolyhedral extensions likewise require feasible progressions and efficient integer-feasibility queries; an outer polyhedral relaxation alone is insufficient. For continuous sets, polynomial images of compact Markov sets remain Markov under a full-generic-rank hypothesis~\cite[Theorem~1.4]{Pierzchala}, yielding further norming meshes~\cite[Proposition~2]{PV}. This supplies neither integer representatives nor uniform encoding bounds for the Markov constants.

\section*{AI assistance and formal verification}
The author used OpenAI's Codex assistant extensively in developing this paper. The starting point was the author's proposal to apply the CHKM reflecting-set decomposition, as used in the Ari--Hildebrand work, to polynomial approximation. The author contributed the problem formulation and choice of geometric approach, supplied relevant literature, and worked through the proposed arguments in an ongoing exchange with the assistant. He also guided the encoding and complexity analysis and the presentation of the results.

The assistant helped develop and revise proofs, carry out calculations, investigate related literature, and prepare the manuscript, Lean formalizations, and computational examples. Reviews from additional AI-assisted discussions were used to identify points requiring further examination and to suggest improvements. AI assistance therefore included substantive mathematical work as well as exposition. The author selected and revised the material included in the paper and is responsible for its claims and attribution. The formal checks provide additional evidence for the specific statements described below; they do not certify the entire manuscript.

The accompanying Lean development contains 256 checked results covering the sampled approximation bound, reflecting-cell geometry, mixed-integer reduction, and encoding and runtime calculations under explicit hypotheses. Classical Markov inequalities, published geometric inputs, oracle implementations, and the complete algorithmic bit-complexity analysis remain external; the full FPTAS is not formally verified. The supplementary verification guide maps the paper to the Lean statements, assumptions, and reproduction steps; the \href{https://github.com/RobertHildebrand/FPTAS-Polynomial-Optimization/tree/main/verification}{repository} provides the source and detailed verification scope.

\end{document}